\newtheorem{theorem}{Theorem}[section]
\newtheorem{lemma}[theorem]{Lemma}
\newdimen\epsfxsize
\newcommand {\gap}     {\makebox[0.075 in]{}}   
\newcommand {\fto}     {\longrightarrow}
\newcommand{\cons}[1] {\left<{#1}\right>}
\newcommand{\R}{\mathbb{R}}
\newcommand {\set}[1]  {\left\{ {#1} \right\}}
\newcommand{\Wdg} {\omega}
\newcommand{\PWdg}{\tilde{\omega}}
\begin{document}

\title{Nonsimplicities and the perturbed wedge}
\author{Fred B. Holt}
\address{5520 - 31st Ave NE; Seattle, WA 98105; 
{\rm fbholt62@gmail.com}}

\date{1 Mar 2015}

\begin{abstract}
In producing a counterexample to the Hirsch Conjecture, Francisco Santos has 
described a new construction, the {\em perturbed wedge}.  Santos starts in dimension
$5$ with a counterexample to the nonrevisiting conjecture.  If this counterexample were
a simple polytope, we could repeatedly apply wedging in order to produce a counterexample
to the Hirsch conjecture.  However, the $5$-dimensional polytope is not simple, so we
need a different way to proceed - thus the perturbed wedge.

In the perturbed wedge, we take the wedge over one facet of a nonsimple polytope and then
perturb one or more of the facets of the wedge to move the resulting polytope closer to
simplicity.

In this paper, we examine some of the technical underpinnings of the perturbed wedge, using
Santos and Weibel's counterexample to the Hirsch conjecture as the guiding example.

\end{abstract}


\maketitle


\section*{Introduction}
Coincidence in high dimensions is a delicate issue.  Using nonsimple polytopes
and introducing the
perturbed wedge construction, Santos \cite{Paco} has produced a counterexample to
 the Hirsch conjecture.  As a complement to Santos' exposition, we take the 
 specific counterexample in dimension $20$ provided by Matschke, Santos, and Weibel \cite{Weibel, MSW}, and study some of the
 mechanics behind its construction.
 
 We show first that the $5$-dimensional polytope with which Santos begins his construction
 is a counterexample to the nonrevisiting conjecture \cite{KW, KKd}.
 If Santos and Weibel's counterexamples were simple, 
 then repeated wedging would produce a corresponding counterexample to the 
 Hirsch conjecture \cite{KW}.  Since the counterexample to the nonrevisiting conjecture is
 not simple, we need an alternate method to produce the corresponding counterexample to
 the Hirsch conjecture, and the perturbed wedge provides this method.

Santos presents the perturbed wedge through its dual construction.
A one-point suspension over a vertex in the dual corresponds to a wedge over a facet
in the primal setting, and perturbing a vertex in the dual corresponds to perturbing a facet
in the primal setting.
Here we explore the perturbed wedge as a construction in the primal setting.
This is illustrated in Figure~\ref{SantosWedgeFig}.

Santos' 2010 paper \cite{Paco} makes four important contributions to the 
field of combinatorics and the study of polytopes:
\begin{enumerate}
\item The discovery of a counterexample to the nonrevisiting conjecture for bounded polytopes.  Santos and Weibel construct counterexamples in dimension $5$.  
 
\item Re-introducing nonsimple polytopes into this area of study.  Reduction arguments from the mid-1960s \cite{KW} had enabled researchers in this area \cite{KKd, HKsh, FzH, Hblend}
to focus on simple polytopes.  Santos' counterexample to the nonrevisting conjecture is not a simple polytope, and the perturbed wedge construction requires careful tracking of the nonsimplicities in the course of producing the counterexample to the Hirsch Conjecture.  This work elevates the study of nonsimplicities in polytopes and will enrich the research and pedagogy in this area.

\item Highlighting a gap in the literature, regarding the reduction arguments from nonsimple to simple cases.  We knew previously that for the Hirsch conjecture, if we had a nonsimple counterexample, we could perturb the facets to create a simple counterexample \cite{KW}.  We also knew that for simple polytopes, if we had a counterexample to the nonrevisiting conjecture, we could apply the wedge construction repeatedly to create a counterexample to the Hirsch conjecture.  And we knew that any counterexample to the Hirsch conjecture is also a counterexample to the nonrevisiting conjecture.  However, we did not have a ready prescription on how proceed from a nonsimple counterexample to the nonrevisiting conjecture.

\item Identifying a new construction, the perturbed wedge, that fills this gap.  The perturbed wedge takes a nonsimple counterexample to the nonrevisiting conjecture and produces a counterexample to the Hirsch conjecture. 
\end{enumerate}

\vskip .125in

Using Weibel and Santos' counterexample \cite{Weibel, MSW} to the Hirsch conjecture, we study
the combinatorics behind each application of the perturbed wedge from dimension $5$
through dimension $20$.  We track the effect of the perturbed wedges on the
nonsimplicities.  But first we establish that the $5$-dimensional all-but-simple spindles
of length $6$,
on which Weibel and Santos start their constructions, are counterexamples to the 
nonrevisiting conjecture.

\section{A counterexample to the nonrevisiting conjecture}
A $d$-dimensional spindle $(P,x,y)$ is a polytope with two distinguished 
vertices $x$ and $y$ such that every facet of $P$ is incident to either $x$ or $y$.
The {\em length} of the spindle $(P,x,y)$ is the distance $\delta_P(x,y)$.
The spindle $(P,x,y)$ is {\em all-but-simple} if every vertex of $P$ other than $x$ and $y$ is 
a simple vertex.

A $d$-dimensional all-but-simple spindle $(P,x,y)$ 
of length $d+1$ is a counterexample to the nonrevisiting conjecture.

\begin{lemma} Let $(P,x,y)$ be a $d$-dimensional all-but-simple spindle of length $d+1$.
Then every path from $x$ to $y$ revisits at least one facet.
\end{lemma}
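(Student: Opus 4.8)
The plan is to argue by contradiction: suppose there is a nonrevisiting path from $x$ to $y$. A standard reduction lets us assume it is a simple path $x = v_0, v_1, \ldots, v_k = y$: deleting stalls $v_i = v_{i+1}$ keeps the walk nonrevisiting, and if a nonrevisiting walk revisited a vertex $v$ there would be an intervening vertex $v' \ne v$, so some facet through $v$ but not $v'$ (distinct vertices of $P$ lie on distinct facet sets) would be left and re-entered, a contradiction. For a simple path, $v_1, \ldots, v_{k-1}$ are vertices of $P$ other than $x$ and $y$, hence simple by the all-but-simple hypothesis, each lying on exactly $d$ facets; also $k \ge \delta_P(x,y) = d+1$.

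Next I would set up a global count of facets. For each $i$ let $S_i$ be the set of facets through $v_i$, and $f_i = |S_i|$, so $f_i = d$ for $0 < i < k$, while $f_0$ and $f_k$ (the counts at $x$ and $y$) are at least $d$. The nonrevisiting hypothesis says precisely that for each facet $F$ the index set $\{\, i : v_i \in F \,\}$ is an interval, and the spindle hypothesis says every facet contains $v_0$ or $v_k$; together these give that $S_0 \cup \cdots \cup S_k$ is the set of all $N$ facets and that $S_i \cap (S_0 \cup \cdots \cup S_{i-1}) = S_i \cap S_{i-1}$. Telescoping the union then yields
\[
N \;=\; f_0 \;+\; \sum_{i=1}^{k} m_i, \qquad m_i \;:=\; \bigl|\, S_i \setminus S_{i-1}\,\bigr|.
\]
The key local fact needed is that an edge at a simple vertex of a $d$-polytope lies on exactly $d-1$ of the $d$ facets through that vertex, each of which also contains the edge's other endpoint --- in other words, the vertex figure at a simple vertex is a simplex. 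Applying this at $v_i$ for $1 \le i \le k-1$ gives $m_i = 1$; applying it at the simple vertex $v_{k-1}$, whose edge to $y$ lies on $d-1$ facets all of which contain $y$, gives $m_k = f_k - (d-1)$.

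Substituting, $N = f_0 + (k-1) + f_k - (d-1) = f_0 + f_k - d + k$. On the other hand, since every facet contains $x$ or $y$, inclusion-exclusion gives $N = f_0 + f_k - c$, where $c \ge 0$ is the number of facets incident to both $x$ and $y$. Comparing forces $c = d - k \le d - (d+1) = -1$, contradicting $c \ge 0$. Hence no nonrevisiting path from $x$ to $y$ exists; that is, every path from $x$ to $y$ revisits a facet, as claimed. I expect the main obstacle to be the local edge-facet count and checking that the global identity for $N$ genuinely relies on the ``interval'' form of the nonrevisiting condition; granted those, the contradiction is immediate.
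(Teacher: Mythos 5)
Your argument is correct, but its bookkeeping is genuinely different from the paper's. The paper fixes the two facet classes $X$ (facets through $x$) and $Y$ (facets through $y$) and analyzes the path dynamically: nonrevisiting forces every facet-arrival along the simple part of the path to lie in $Y$ and every departure to come out of $X$, so the $j$-th simple vertex lies on exactly $d-j$ facets of $X$; after $d$ steps no $X$-facet remains, forcing $u_d=y$ and hence length at most $d$, contradicting length $\ge d+1$. You instead prove one global identity: with $S_i$ the facets through $v_i$, the interval form of nonrevisiting collapses the telescoped union to $N=f_0+\sum_i |S_i\setminus S_{i-1}|$, the simple-vertex fact (each edge at a simple vertex lies on exactly $d-1$ of its $d$ facets) gives $|S_i\setminus S_{i-1}|=1$ in the middle and $f_k-(d-1)$ at the last step, and the spindle property gives $N=f_0+f_k-c$ with $c=|S_0\cap S_k|\ge 0$; comparing yields $c=d-k<0$. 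The ingredients are the same (one facet out and one genuinely new facet in per step at a simple vertex; nonrevisiting; every facet meets $x$ or $y$), but your double count replaces the paper's structural conclusion --- that a nonrevisiting path must march monotonically out of $X$ and into $Y$ --- with a short arithmetic contradiction, and it has the minor advantages of handling facets containing both $x$ and $y$ explicitly through the term $c$, of avoiding the paper's final identification $u_d=y$, and of spelling out the reduction to a simple path with simple interior vertices, which the paper assumes tacitly. The facts you invoke (the interval characterization of nonrevisiting, the simplex vertex figure at a simple vertex, and distinct vertices having distinct facet sets) are all sound, so I see no gap.
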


\begin{proof}
Let $X$ be the $n_1$ facets incident to $x$, and let $Y$ be the $n_2$ facets incident to $y$.
Let $\rho = [x,u_1,\ldots,u_{k-1},y]$ be a path from $x$ to $y$ of length $k > d$, with all of the
$u_i$ being simple vertices.

As simple vertices, each $u_i$ is incident to $d$ facets.  
$u_1$ is incident to $d-1$ facets in $X$ and one
facet in $Y$, and $u_{k-1}$ is incident to one facet in $X$ and $d-1$ facets in $Y$.  The incidence
table for $\rho$ looks like the following:
$$
\begin{array}{rccccccc|cccccc}
 & \# {\rm facets} & \multicolumn{6}{c}{X} & \multicolumn{6}{c}{Y}  \\ \hline
x = u_0: & n_1  & 1 & 1 & \cdots & 1 & \cdots  & 1 & & & & & &  \\
u_1: & d   &  &  &  & 1 & \cdots & 1 & 1 & & & & &  \\
  & &  & &  &  & \cdots &  & & \cdots & & & &  \\
u_{k-1}: &  d   & &  &  &  &  & 1 & 1 & \cdots & 1 &  &  &   \\
y = u _k: &  n_2   & &  &  &  &  &  & 1 & \cdots & 1 & \cdots & 1 & 1  \\ \hline
\end{array}
$$

Consider the facet-departures and facet-arrivals from $u_1$ to $u_{k-1}$ 
(the simple part of the path).  
If any of the arrivals were back
to a facet in $X$, this would be a revisit since these facets were all incident 
to the starting vertex $x$.  So the arrivals must all be in $Y$.  

Similarly, all of the departures must be from $X$.  Any departure from a facet in $Y$ would
create a revisit since at $y$ the path would be incident to all of the facets in $Y$.

There are too many arrivals and departures to prevent a revisit.
The vertex $u_1$ is incident to $d-1$ facets in $X$, and since each departure leaves
a facet of $X$, $u_j$ is incident to $d-j$ facets in $X$.  So $u_d$ has completely departed
from $X$, and we must therefore have $u_d=y$.  

Thus if $\rho$ were nonrevisiting, this path would have length at most $d$, 
contradicting the condition that the length $k > d$.
\end{proof}

Santos \cite{Paco} has produced all-but-simple spindles in dimension $5$ of length $6$. 
Currently the smallest example has $25$ facets.
So in dimension $5$ with $25$ facets, we have a counterexample to the nonrevisiting conjecture, with length well below the Hirsch bound. Since the spindle is not simple, at $x$ and $y$, 
the usual way of generating the corresponding counterexample \cite{KW, Hnr} to the Hirsch conjecture (through repeated wedging) does not directly apply, and we need an alternate construction.  

As an aside, we note that Matschke, Santos, and Weibel \cite{MSW} have recently constructed
all-but-simple spindles in dimension $5$ of any length.

\begin{figure}[tb] 
\centering
\includegraphics[width=4.5in]{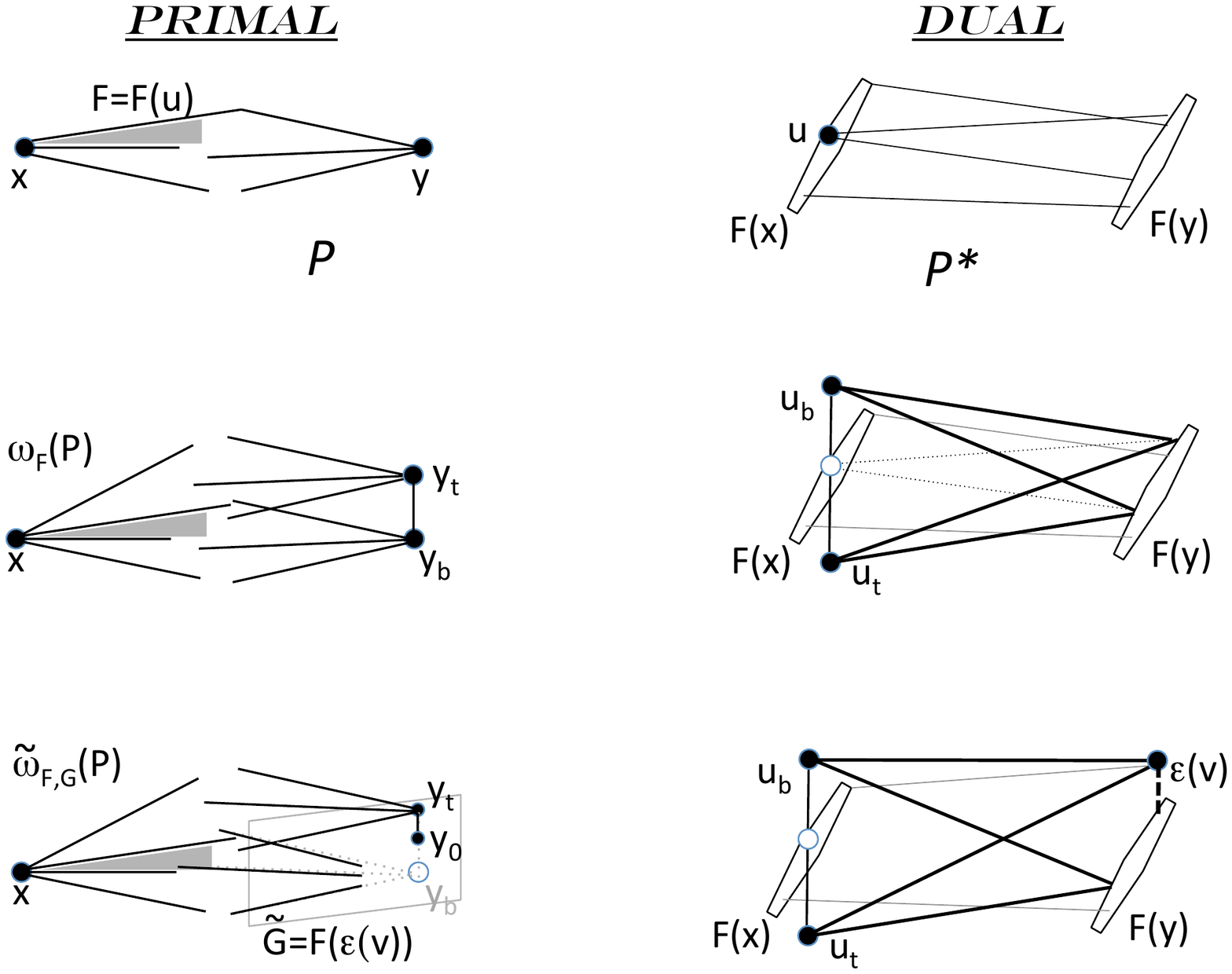}
\caption{\label{SantosWedgeFig} This figure illustrates the perturbed wedge 
construction and its dual construction.  In the primal setting, we first perform a wedge
of $P$ over a facet $F$ which is incident to $x$, followed by a perturbation in the
last coordinate of a facet $G$ incident to $y$.}
\end{figure}

\subsection{A gap in the literature. }
Santos' work has highlighted a gap in the previous literature on the Hirsch conjecture, regarding the equivalence of the simple and nonsimple cases.  We have the previously established results:

\begin{enumerate}
\item If $P$ is a nonsimple counterexample to the Hirsch conjecture of dimension $d$ and $n$ facets, then there is a simple polytope $Q$ also of dimension $d$ and $n$ facets that is also a counterexample to the Hirsch conjecture.  $Q$ is obtained directly from $P$ by perturbing apart its nonsimple vertices.
\item If $P$ is a simple polytope of dimension $d$ and $n$ facets, with two vertices $x$ and $y$ such that every path from $x$ to $y$ includes a revisit to a facets, then there is a simple polytope $Q$ of dimension $n-d$ and $2n-2d$ facets that is a counterexample to the Hirsch conjecture.  $Q$ is obtained directly from $P$ by taking wedges over all (the images of) the $n-2d$ facets of $P$ not incident to either $x$ or $y$.
\item Any counterexample to the Hirsch conjecture is also a counterexample to the nonrevisiting conjecture.
\end{enumerate}

The interesting gap that Santos has addressed is this:  suppose we have a nonsimple counterexample to the nonrevisiting conjecture, then how do we produce the corresponding counterexample to the Hirsch Conjecture?

The $5$-dimensional all-but-simple spindle $(P,x,y)$ produced by Santos is a counterexample
to the nonrevisiting conjecture, as we have verified above, but there are no facets not incident
to either $x$ or $y$.  So we cannot apply the wedges as we would if $P$ were simple.

We also cannot arbitrarily perturb apart the facets incident to $x$ and $y$, to create a simple 
polytope that is a counterexample to the nonrevisiting conjecture.  The revisit on a path may 
well occur along the last edge, the one terminating in $y$.  If we perturb the facets incident to 
$y$, to create simple vertices, some of these revisits may be lost.

\section{The structure of nonsimple faces}

Initially, the spindle is all-but-simple, so the nonsimplicity is concentrated at the two special
vertices $x$ and $y$.  As we iteratively apply wedging, we create nonsimple edges, $2$-faces, and
so on.   There is structure to the nonsimplicity at the images of $x$ and $y$, and we have to
consider how this affects our choice of the facet (or its corresponding vertex in the dual) for 
perturbation.

A $k$-face $F^k$ of a polytope consists of a supporting $k$-dimensional space 
and a boundary.  A facet 
incident to $F^k$ contributes either to defining the supporting space or to defining the boundary.
So relative to a given $k$-face $F^k$, we can partition incident facets into two sets:  
the space-supporting facets and the boundary facets.

For a simple $k$-dimensional face, the supporting space is defined
by the intersection of exactly $d-k$ hyperplanes, and the boundary is given by the
intersection of at least $k+1$ additional hyperplanes with this supporting space. 
The intersections of the boundary hyperplanes with the space-defining hyperplanes and with
each other are all simple intersections.  (We note that vertices are exceptions.  The
$0$-dimensional space is given by the intersection of $d$ hyperplanes, but no additional
facets are required for the boundary.)

{\em Nonsimplicity}  Let $P$ be a $d$-dimensional polytope with facets $H^T$.
A {\em nonsimplicity} of dimension $k$ is a collection of $m$ facets $\tilde{H}^T$ in $H^T$
that satisfy the following conditions:
\begin{enumerate}
\item {\em Nonsimple intersection.} The intersection of the $m$ facets in $\tilde{H}^T$ 
is the supporting space for a $k$-dimensional face $F$ of $P$, with $m > d-k$.
\item {\em Maximality.}  No facet in $H^T \backslash \tilde{H}^T$ contains the supporting space
of $F$.
\item {\em Affine support.}  There is no facet $h^T$ in $\tilde{H}^T$ such that
the intersection of the facets $\tilde{H}^T \backslash h^T$ is more than $k$-dimensional.
\end{enumerate}

We call this last condition the requirement for {\em affine support} because in the dual setting,
we may know that the points $\tilde{H}$ are affinely dependent, but there may be points
in $\tilde{H}$ that do not contribute to this affine dependence.  That is, for any affine dependence
$\tilde{H} \cdot \alpha = 0$, the coordinate in $\alpha$ corresponding to $h$ is always $0$.
We will see examples of this below.

The {\it excess} of a nonsimplicity is the quantity $m-(d-k)$, the number of facets supporting
the face beyond those necessary in a simple polytope.

{\em Nonsimplicities in a nonsimple polytope.} 
A nonsimple polytope may contain several nonsimplicities, and these nonsimplicities
may be of different dimensions.  

Let's consider a few examples.  A pyramid over a 
hexagon is a $3$-dimensional polytope, whose apex $v$ is a nonsimple vertex,
but the spaces supporting all of the edges and $2$-faces are simple.  The prism over
this pyramid is a $4$-dimensional polytope whose only nonsimplicity is the edge
$[v_b, v_t]$.  The vertices $v_b$ and $v_t$ are not simple vertices, but in each case
the nonsimple vertex is the intersection of one facet with the nonsimplicity in the vertical edge.
So although the vertex $v_t$ is not simple, the facets incident to $v_t$ do not satisfy the
condition of affine support; in particular the top facet does not contribute to the nonsimple edge.

For the next example, consider
our all-but-simple spindle $(P,x,y)$.  When we perform the wedge over a facet $F$ incident
to $x$, the nonsimple vertex $y$ generates a nonsimplicity in the vertical edge $[y_b, y_t]$.  
This edge contains the nonsimplicity, and its two vertices consist of the intersection of the
nonsimplicity supporting the vertical edge with the top facet $T$ and with the bottom facet $B$.  

Suppose we wanted to reduce the nonsimplicity of this edge $[y_b,y_t]$ by perturbing a facet.
If we perturb $T$ or $B$, the nonsimplicity is not changed.  We must perturb a facet that is
contributing to the nonsimplicity.  That is, to reduce the nonsimplicity of the edge $[y_b, y_t]$
we must perturb one of its space-supporting facets.

Continuing this example, if we now construct the wedge over a facet incident to the image of $x$ in the first wedge, the nonsimple edge $[y_b,y_t]$ generates a nonsimple $2$-face.  If the foot of this wedge is $T$ or $B$, then the image of $[y_b, y_t]$ is a triangle; otherwise it is a rectangle.
In either case the nonsimplicity occurs in the supporting $2$-space.  

The nonsimplicity of a nonsimple $k$-face can arise in any dimension from $0$ to $k$.
A nonsimple $k$-face could have a combination of various nonsimplicities across these dimensions.  

This means that when we choose a facet to perturb as 
part of the construction of the perturbed wedge, it won't suffice to choose any facet
incident to the $k$-face.  We need to choose a facet that is contributing to some nonsimplicity.  

This approach to nonsimplicity borrows heavily from the insights underlying the Gale transform
\cite{Gr, Zg}.

\renewcommand{\arraystretch}{1.4}

\section{Applying the perturbed wedge to all-but-simple spindles}

We will study the perturbed wedge by following the proof of the strong $d$-step
theorem for prismatoids (Theorem 2.6 in \cite{Paco}) and tracking the nonsimplicities
through the specific construction
of the counterexample to the Hirsch conjecture \cite{Weibel, MSW}. 

In Santos and Weibel's specific construction, they reuse the base of each wedge as the foot for 
the next wedge.  This simplifies the exposition.  

Let $(P_5,x,y)$ be the $5$-dimensional all-but-simple spindle with $25$ facets and
length $6$.
We can write the matrix of inward-pointing
normals $H^T(P_5)$ for the facets as
$$ H^T(P_5) = \left[ \begin{array}{c} H^T(\tilde{X}) \\ h_{12}^T \\ \hline
H^T(\tilde{Y}) \\ h_{33}^T
\end{array} \right].$$
For the vertices $x$ and $y$ we have
$$\begin{array}{lcl}
\left[ \begin{array}{c} H^T(\tilde{X}) \\ h_{12}^T \\ \hline
H^T(\tilde{Y}) \\ h_{33}^T
\end{array} \right] \cdot x = \left[\begin{array}{c} \cons{0} \\ 0 \\ \hline
\cons{+} \\ + \end{array}\right] & {\rm and} &
\left[ \begin{array}{c} H^T(\tilde{X}) \\ h_{12}^T \\ \hline
H^T(\tilde{Y}) \\ h_{33}^T
\end{array} \right] \cdot y = \left[\begin{array}{c} \cons{+} \\ + \\ \hline
\cons{0} \\ 0 \end{array}\right] \end{array}
$$ 
We assume that that we are embedding the polytope in the flat $1\times \R^5$
as the solution to $H^T v \ge \cons{0}$.

For $(P_5,x,y)$, the vertex $x$ is incident to the twelve facets $h^T_1,\ldots,h^T_{12}$.
We reserve the indices $h_{13}^T,\ldots,h_{20}^T$ for the wedges at $x$ to
stay consistent with Santos and Weibel's counterexample.  The vertex
$y$ is incident to the thirteen facets $h^T_{21}, \ldots,h^T_{33}$.  The wedges
at $y$ will fill the indices from $h_{34}^T, \ldots, h_{40}^T$.
Among the initial hyperplanes at $x$ and $y$, we isolate the
facets $h_{12}^T$ and $h_{33}^T$ since Santos and Weibel apply all the wedges at $x$
using images of $h_{12}^T$ as the feet, and all the wedges at $y$ using 
images of $h_{33}^T$ as the feet. 

Each application of the perturbed wedge construction consists of two operations.
First we take the wedge over a facet incident to the image of $x$ (or $y$ respectively),
which adds one dimension and one facet to the polytope.
Then we perturb one or more facets incident to $y$ (or $x$ respectively), in the new dimension.

Since the action of each perturbed wedge on $H^T$ occurs in a separate new dimension, 
we can reorder the columns of $H^T$ as Santos and Weibel did, to apply the eight wedges
at $x$ first (and the corresponding perturbations to facets incident to $y$), 
and then apply the seven wedges at $y$ (and the perturbations to facets at $x$).

\subsection{First perturbed wedge: $d=6$.}
For the first application of the perturbed wedge to $P_5$, Santos and Weibel take the wedge over
$h_{12}^T$ and perturbs $h_{32}^T$.

After the wedge over the facet $h_{12}^T$ we have
\begin{eqnarray*}
H^T(\Wdg P_5) & = &
 \left[ \begin{array}{cc} H^T(\tilde{X}) & \cons{0} \\
  h^T_{12} & C \\
  h^T_{12} & -C \\ \hline
  H^T(\tilde{Y}) & \cons{0} \\
  h^T_{33} & 0 \end{array} \right]
\end{eqnarray*}
Santos and Weibel use $C= 10^7$.  Since these are inward-pointing normals, the new facet
$h_{12}^T = \left[ h_{12}^T \gap C \right]$ is the base of the wedge, and the new facet
$h_{13}^T = \left[ h_{12}^T \gap -C \right]$ is the top of the wedge.

Vertices of $P_5$ that are incident to the foot $h_{12}^T$ of this wedge each have
one natural image.  In coordinates we can write this
$$ \left[ v \right] \fto \left[ \begin{array}{c} v \\ 0 \end{array} \right].$$
Vertices of $P_5$ that are not incident to the foot each have two natural images,
one in the base, and one in the top, connected by a vertical edge.
$$ \left[ v \right] \fto v_b = \left[ \begin{array}{c} v \\  -h_{12}^Tv / C \end{array} \right],
\gap {\rm and} \gap v_t = \left[ \begin{array}{c} v \\  h_{12}^Tv / C \end{array} \right].$$

The facets other than the top and base all have natural images as vertical facets in the wedge.

After the wedge, the natural images of the nonsimple vertices $x$ and $y$
are the nonsimplicities $x$ and the vertical edge $[y_b,y_t]$.  This vertical
edge is an example of a nonsimplicity with a simple boundary -- both vertices are the
intersection of the $1$-dimensional space supporting the edge with a single additional
facet.  Although both $y_b$ and $y_t$ are nonsimple vertices, the nonsimplicity is the vertical
edge between them.  The vertex $y_t$ is the intersection between the nonsimple edge 
and the top,
and $y_b$ is the intersection between the nonsimple edge and the base.  

\begin{table}
\begin{tabular}{cccccc} \hline
$P$ & ${\rm dim}P$ & $S$ & ${\rm dim}S$ & \# supporting facets & excess \\
 & $d$ & & $k$ & $m$ & $m-(d-k)$ \\ \hline
 $P_5$ & $5$ & $x$ & $0$ & $12$ & $7$ \\
  & & $y$ & $0$ & $13$ & $8$ \\ \hline
  $\Wdg P_5$ & $6$ & $x$ & $0$ & $13$ & $7$ \\
   & & $[y_b, y_t]$ & $1$ & $13$ & $8$ \\ \hline
  $\PWdg P_5$ & $6$ & $x$ & $0$ & $13$ & $7$ \\
   & & $[y_0, y_t]$ & $1$ & $12$ & $7$ \\ \hline
\end{tabular}
\caption{\label{P5Table}
Data on nonsimplicities through both steps of the first perturbed wedge.
Although both $y_b$ and $y_t$ are nonsimple vertices, the nonsimplicity is the vertical
edge between them.  $y_t$ is the intersection between the nonsimple edge and the top,
and $y_b$ is the intersection between the nonsimple edge and the base.  The perturbation
of one of the facets supporting the nonsimple edge reduces the excess of this edge, and
the perturbed facet truncates away $y_b$, terminating the edge at $y_0$.}
\end{table}

We now perturb $h_{32}^T$, which is one of the facet supporting the nonsimple
edge $[y_b,y_t]$.  Since the perturbation is positive (Santos and Weibel use $\epsilon=1$),
the inward-pointing normal for $h_{32}^T$ is tipped up.  At the edge $[y_b,y_t]$
the perturbed facet truncates away $y_b$, terminating the nonsimple vertical edge
at $y_0$.
$$ y_0 = \left[\begin{array}{c} y \\ 0 \end{array}\right].$$
The perturbed facet no longer supports the nonsimple edge, and it replaces the
base as a boundary facet for this edge.

The effect of this first perturbed wedge, taking the wedge over a facet at $x$ and
perturbing a facet at $y$, is to
reduce the excess of the nonsimplicity at $y$ by $1$.  See Table~\ref{P5Table}.

For this perturbed wedge, we can write the matrix of inward-pointing normals as
\begin{eqnarray*}
H^T(\PWdg P_5) & = &
 \left[ \begin{array}{cc} H^T(\tilde{X}) & \cons{0} \\
  h^T_{12} & C \\
  h^T_{12} & -C \\ \hline
  H^T(\tilde{Y}) & \epsilon_{32} \\
  h^T_{33} & 0 \end{array} \right]
\end{eqnarray*}
By $\epsilon_{32}$ we mean the vector all of whose entries are $0$ except for
the entry indexed by $32$, whose entry is $\epsilon$.  We again note that Santos and Weibel
use $C=10^7$ and $\epsilon=1$.

Since the nonsimplicity in $\Wdg P_5$ is an edge of excess $8$, perturbing a single
facet, e.g. $h_{32}^T$, shifts this one hyperplane from supporting the space of the edge
to forming part of the boundary of the edge.  But the nonsimplicity in $\PWdg P_5$
is still the $1$-dimensional coincidence of twelve hyperplanes.

\subsection{Second perturbed wedge: $d=7$}
For the second application of the perturbed wedge, Santos and Weibel use the base $h_{12}^T$
from the previous wedge as the foot, and we perturb $h_{31}^T$.  The construction
is much like the previous one, and we have:
\begin{eqnarray*}
H^T(\PWdg^2 P_5) & = &
 \left[ \begin{array}{ccc} H^T(\tilde{X}) & \cons{0} & \cons{0} \\
  h^T_{12} & C & C \\
  h^T_{12} & -C & 0 \\
  h^T_{12} & C & -C \\ \hline
   H^T(\tilde{Y}) & \epsilon_{32} & \epsilon_{31} \\
  h^T_{33} & 0 & 0 \end{array} \right]
\end{eqnarray*}
The new $h_{12}^T$ is again the base of the wedge, and 
$h_{14}^T = [h_{12}^T \gap C \gap -C]$ is the top.

For the nonsimplicities we still have the nonsimple vertex $x$ with excess $7$.
Down at $Y$ the nonsimplicity is now a rectangle.  Under the wedge, the nonsimple
vertical edge $[y_0,y_t]$ has for its natural image a rectangle 
${\rm Hull}(y_{0b}, y_{0t}, y_{tb}, y_{tt})$.  That is, $y_0$ has one natural image $y_{0b}$
in the base and another $y_{0t}$ in the top; similarly $y_t$ has natural images $y_{tb}$ and 
$y_{tt}$ in the base and top respectively.  Perturbing $h_{31}^T$ in the new dimension
truncates away $y_{0b}$ and $y_{tb}$.  

The nonsimplicity at $Y$ is now the rectangle
${\rm Hull}(y_{00}, y_{0t},y_{t0},y_{tt})$.  The two-dimensional supporting space is
supported by eleven facets $h_{21}^T,\ldots,h_{30}^T$ and $h_{33}^T$, and it has a
simple boundary consisting of the four facets $h_{13}^T, \; h_{14}^T,$ and
$h_{31}^T, \; h_{32}^T$.

\begin{table}
\begin{tabular}{cccccc} \hline
$P$ & ${\rm dim}P$ & $S$ & ${\rm dim}S$ & \# supporting facets & excess \\
 & $d$ & & $k$ & $m$ & $m-(d-k)$ \\ \hline
   $\PWdg P_5$ & $6$ & $x$ & $0$ & $13$ & $7$ \\
   & & $[y_0, y_t]$ & $1$ & $12$ & $7$ \\ \hline
   $\PWdg^2 P_5$ & $7$ & $x$ & $0$ & $14$ & $7$ \\
   & & ${\rm Hull}(y_{00}, y_{t0},y_{tt},y_{0t})$ & $2$ & $11$ & $6$ \\ \hline
\end{tabular}
\caption{\label{P7Table}
After the second perturbed wedge, the nonsimplicity in $Y$ is now of dimension $2$.
By perturbing one of the supporting hyperplanes, we reduce the excess of this nonsimplicity
by one.}
\end{table}

\subsubsection{We cannot choose any facet at $X$ to be the foot of the wedge.}
In iterating the perturbed wedge, we discover that there are restrictions on our choice
of the foot for the next wedge.  There are combinatorial circumstances under which the 
distance from $y_0$ to $x$
may not be increased under the perturbed wedge.

\begin{lemma}\label{BadLemma}  
Let $y$ be a nonsimple vertex of a $d$-dimensional polytope $P$.  Let $F$
be a facet of $P$ not incident to $y$, and let $G$ be a facet incident to $y$.
We perform the perturbed wedge, wedging over $F$ and perturbing the image of $G$.

If there was a nonsimple edge in $G$ from $y$ to a vertex $w$ in $F \cap G$, then 
 after the perturbed wedge this edge has a natural image $[w,y_0]$.
\end{lemma}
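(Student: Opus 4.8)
The plan is to work in the wedge coordinates set up above and to watch what the perturbation of $G$ does to the image of the edge $e=[y,w]$. Write $h_F$ and $h_G$ for the inward normals of $F$ and $G$ and $C>0$ for the wedge constant, and recall that under the wedge $\Wdg P$ over $F$ a vertex $v$ off the foot has the two images $v_b=[v;-h_F^T v/C]$ and $v_t=[v;h_F^T v/C]$, while a vertex on the foot has the single image $[v;0]$. Since $F$ is not incident to $y$ we have $h_F^T y>0$, so $y$ has images $y_b,y_t$; since $w\in F\cap G\subset F$ we have $h_F^T w=0$, so $w$ has a single image, which I continue to write $w=[w;0]$. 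Because $h_F^T$ vanishes at $w$ and is positive at $y$, it is positive everywhere on $e$ except at $w$, so the foot meets $e$ only in $w$. By the standard description of the faces of a wedge (a face $\sigma$ of $P$ with $\sigma\not\subset F$ contributes to $\Wdg P$ the wedge of $\sigma$ over $\sigma\cap F$), the edge $e$ then contributes the wedge of the segment $e$ over its endpoint $w$, namely the triangle $T_e={\rm Hull}(w,y_b,y_t)$; this is the triangle-versus-rectangle dichotomy for wedging a nonsimple edge discussed above.

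Next I would carry out the perturbation, replacing the image $[h_G^T\;0]$ of $G$ by the perturbed facet $[h_G^T\;\epsilon]$ with $\epsilon>0$ small, and evaluate the new constraint $h_G^T v+\epsilon t\ge 0$ at the three vertices of $T_e$. Since $w\in G$ and $y\in G$ we have $h_G^T w=h_G^T y=0$, so this value is $0$ at $w=[w;0]$, it is $-\epsilon h_F^T y/C<0$ at $y_b$, and it is $+\epsilon h_F^T y/C>0$ at $y_t$; along the vertical line through $y$ it vanishes exactly at $t=0$, i.e.\ at $y_0=[y;0]$. Thus the perturbed facet of $G$ still contains the image of $w$, it truncates $y_b$ away, and it meets the vertical edge $[y_b,y_t]$ at $y_0$. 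Since $G\supseteq e$, the image of $G$ in $\Wdg P$ contains $T_e$, so intersecting $T_e$ with the perturbed halfspace removes the corner $y_b$ and leaves the face ${\rm Hull}(w,y_0,y_t)$ of $\PWdg P$; its edge $[w,y_0]$ joins the image of $w$ to the image $y_0$ of $y$, and is therefore the asserted natural image of $e$. (Equivalently, intersecting $\PWdg P$ with the perturbed images of all the facets supporting the line of $e$ gives directly the segment from $[w;0]$ to $y_0$, so $[w,y_0]$ is a face of $\PWdg P$.)

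The step I expect to be the crux is checking that the perturbed facet of $G$ slices $T_e$ \emph{through} $w$ and $y_0$ rather than cutting $w$ off; this is exactly where the hypothesis $w\in F\cap G$ is used in full, since $w\in G$ keeps the image of $w$ on the perturbed hyperplane while $w\in F$ forces the last coordinate of that image to be $0$, so the perturbed hyperplane must pass through both $[w;0]$ and $y_0=[y;0]$. A secondary point, routine but worth recording, is that $\epsilon$ is small enough that the only combinatorial change near $T_e$ is this one truncation, so $w$ does not cease to be a vertex and no new incidences appear; this holds for any sufficiently small perturbation and, in the worked example, is guaranteed by the choices $C=10^7$, $\epsilon=1$. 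Finally I would note that the nonsimplicity of $e$ plays no role in the argument beyond motivation: the same computation produces the edge $[w,y_0]$ of $\PWdg P$ for any edge $e=[y,w]$ with $w\in F\cap G$, and this is the phenomenon that can keep the perturbed wedge from increasing the distance from $y_0$ to the image of $x$.
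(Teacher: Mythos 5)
Your coordinate set-up reproduces the paper's picture faithfully: under the wedge over $F$ the edge $[w,y]$ becomes the triangle with vertices $w$, $y_b$, $y_t$, and evaluating the perturbed constraint $h_G^T v+\epsilon t\ge 0$ at these three points correctly shows that $y_b$ is cut off and that the perturbed hyperplane passes through $[w;0]$ and $y_0=[y;0]$. The gap is exactly at the step you call the crux: the assertion that intersecting the triangle with the perturbed halfspace ``leaves the face ${\rm Hull}(w,y_0,y_t)$ of $\PWdg P$, with edge $[w,y_0]$.'' First, $\PWdg P$ is not $\Wdg P$ cut by an additional halfspace: the vertical facet over $G$ is \emph{replaced} by the tilted one, so whether the plane of the triangle survives as an intersection of facets of $\PWdg P$ is precisely what must be verified, and your argument never verifies that the segment $[w,y_0]$ (which is certainly contained in $\PWdg P$ and in the perturbed facet) is a $1$-dimensional \emph{face}. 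Second, your closing claim that the nonsimplicity of $[w,y]$ ``plays no role beyond motivation'' is false, and this is why the faceness check cannot be skipped. If $[w,y]$ were a simple edge, its line would be the intersection of exactly $d-1$ facet hyperplanes, one of them $G$; after the wedge the triangle's plane is the coincidence of the corresponding $d-1$ vertical hyperplanes, and tilting $G$ destroys that coincidence. Along the open segment $(w,y_0)$ the active constraints of $\PWdg P$ are then only the $d-2$ vertical facets of $\hat{Y}$ together with the tilted $G$ -- that is, $d-1$ hyperplanes in dimension $d+1$ -- so the minimal face of $\PWdg P$ containing $(w,y_0)$ is $2$-dimensional, and $[w,y_0]$ is a chord through the relative interior of that $2$-face, not an edge. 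In that situation the conclusion of the lemma (and its use for bounding $\delta_{\PWdg P}(x,y_0)$, which needs an actual edge for a path to traverse) fails, so your computation alone does not prove it.

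The paper's proof supplies exactly the ingredient you discard: because the edge is nonsimple, its line is supported by $G$ together with at least $d-1$ further facets $\hat{Y}$, so after the perturbation the $2$-plane of the triangle is still defined by the (unperturbed, vertical) facets $\hat{Y}$, and the tilted $G$ meets that plane in the line with last coordinate $0$; hence the truncated triangle is a face of $\PWdg P$ and $[w,y_0]$, being its intersection with the perturbed facet, is a genuine edge. To repair your write-up, replace the truncation assertion by this count of active constraints along $(w,y_0)$: at least $d-1$ vertical hyperplanes from $\hat{Y}$ plus $\tilde{G}$, cutting out a $1$-dimensional space, which uses the nonsimplicity hypothesis in an essential way.
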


\begin{proof}
As a nonsimple edge, the 1-dimensional space of $[w,y]$ is defined by 
the coincidence of $G$ and at least $d-1$ other facets $\hat{Y}$.  
See Figure~\ref{NbrVtxFig}. 
These facets $\hat{Y}G$
are incident to both $w$ and $y$.  The boundary of the edge at $w$ is established by
$F$ and possibly more facets $\hat{X}$, none of which can be incident to $y$.  The boundary
of the edge at $y$ is established by the facets $Y\setminus \hat{Y}$.

Under the wedge $\Wdg_F P$, the image of the edge is a nonsimple $2$-dimensional face,
the triangle with vertices $w$, $y_t$, and $y_b$.

Now, under the perturbation, the space of the triangular face is still defined by $\hat{Y}$.
$\tilde{G}$ intersects this face in the plane with last coordinate $0$, creating an edge from
$w$ to $y_0$.
\end{proof}

\begin{figure}[tb] 
\centering
\includegraphics[width=4.5in]{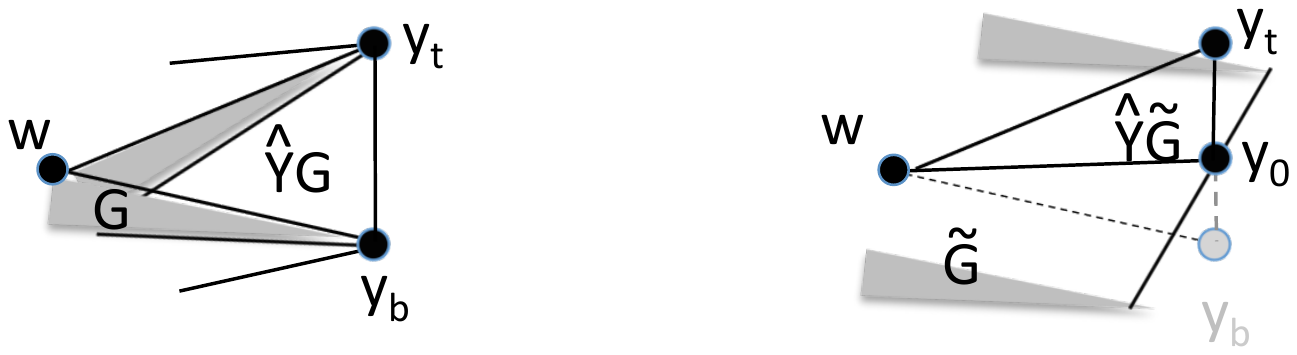}
\caption{\label{NbrVtxFig} Suppose that in the perturbed
wedge we perturb a facet $G$ supporting this nonsimple edge $[w,y]$.
Let the facets supporting the edge be $\hat{Y}G$ and those supporting $y$
be $YG$ with $\hat{Y}\subset Y$.
Under the wedge, the vertex $y=YG$ becomes an edge and
$\hat{Y}G$ becomes a $2$-face.  The vertex $y$ has two natural images, $y_t$ which is 
incident to the facets $YGT$, and $y_b$ which is incident to $YGB$.  The facet $F$ is 
replaced by two facets, the top $T$ and the base $B$.
Now we perturb the facet $G$, introducing a small positive value 
in the last coordinate of its inward-pointing normal.
Since $y$ was nonsimple, the vertex $y_t$ remains, but the vertex $y_b$ is truncated away.
Instead, $\tilde{G}$ now intersects the vertical edge $Y$ in the plane 
$\set{1}\times \R^d \times \set{0}$, at a new vertex $y_0$.  
The $2$-face $\hat{Y}G$  was nonsimple, and so the
$2$-face remains with its space supported by $\hat{Y}$, and $\tilde{G}$ intersects
it in an edge $[w,y_0]$.}
\end{figure}

If the edge $[w,y]$ was fast toward $x$ in $P$, then the edge $[w,y_0]$ would be fast
toward $x$ in $\PWdg P$, and the distance would not be increased:
$\delta_P(x,y)=\delta_{\PWdg P}(x,y_0)$.

As a specific example, we took the second wedge over the base $h_{12}^T$ 
of the previous wedge. This choice works since in perturbing $h_{32}^T$ we had
truncated away $y_b$.  We could not have chosen the top $h_{13}^T$ of 
the previous wedge as the foot of the second wedge, since $[y_t,y_0]$ satisfies the
conditions of the lemma, with $y_t=w$ and $y_0=y$.  If we had used $h_{13}^T$
as the foot of the second wedge, the distance from $x$ to $y_0$ would not have
increased under the perturbed wedge construction.

Since the initial polytope $P_5$ is an all-but-simple spindle, the conditions of this lemma
do not arise on the first application of the perturbed wedge.  However, in iterating the 
perturbed wedge, the dimension of the nonsimplicity grows, and we must choose
the feet of the wedges so as to avoid edges that satisfy the conditions of 
Lemma~\ref{BadLemma}.

\subsubsection{We cannot choose any facet at $Y$ to perturb.}
In order to reduce the excess of the nonsimplicity at $Y$, we have to perturb
a facet that is contributing to the nonsimplicity, that is, a facet that supports the
space of the nonsimplicity. 

In the original polytope $P_5$, the nonsimplicity in $Y$ is a single nonsimple vertex $y$,
so we can choose any hyperplane.  However, even in $\PWdg P_5$ the vertex $y_0$ is the
intersection of a nonsimple edge with the previously perturbed facet $h_{32}^T$.  So
$h_{32}^T$ does not support the nonsimplicity and further perturbing $h_{32}^T$ will not reduce
the excess.

In forming $\PWdg^2 P_5$, after the wedge the $2$-dimensional nonsimplicity is supported
by twelve facets $h_{21}^T,\ldots,h_{31}^T$, and $h_{33}^T$; and it has four
boundary facets $h_{12}^T, h_{13}^T, h_{14}^T$ and $h_{32}^T$.  So to complete
the construction of $\PWdg^2 P_5$, we have to perturb one of the twelve facets
supporting the $2$-dimensional nonsimplicity.  This perturbed facet replaces 
the base $h_{12}^T$ as part of the boundary of the nonsimplicity.  This leaves eleven
facets supporting the $2$-dimensional nonsimplicity, for an
excess of $11-(7-2)=6$.

\subsubsection{If we proceed in this way, the nonsimplicity at $Y$ becomes a $k$-dimensional
hypercube of excess $8-k$.}
If we continue applying the perturbed wedge by 
wedging over the base $h_{12}^T$ of the previous 
wedge and then perturbing a single facet in $Y$, then the
nonsimplicity at $Y$ grows as a $k$-dimensional hypercube
of excess $8-k$ and simple boundary.
The vertices of this hypercube $I^k$ are the natural images of $y$ indexed
by $0$'s and $t$'s from $y_{0\cdots 0}$ to $y_{t \cdots t}$.

Actually the facet $h_{12}^T$ plays no special role.  If we use any facet not incident
to any of the vertices in $I^k$, the nonsimplicity grows as a hypercube $I^k$.  Any facet
incident to the hypercube would satisfy the conditions of Lemma~\ref{BadLemma},
so we can not use these.

However, Santos and Weibel do something interesting in the next perturbed wedge.  They perturb
two facets, which provides something new for our studies.

\subsection{Third perturbed wedge:  perturbing two facets in $d=8$}
For the third application of the perturbed wedge, Santos and
Weibel again use $h_{12}^T$ as the base, but we now perturb
{\em two} hyperplanes, $h_{21}^T$ and $h_{30}^T$.

\begin{eqnarray*}
H^T(\PWdg^3 P_5) & = &
 \left[ \begin{array}{rcccc} & H^T(\tilde{X}) & \cons{0} & \cons{0} & \cons{0} \\
 &  h^T_{12} & C & C & C \\
{\scriptstyle h^T_{13}=}  & h^T_{12} & -C & 0 & 0 \\
{\scriptstyle h^T_{14}=}  & h^T_{12} & C & -C & 0 \\   
{\scriptstyle h^T_{15}=}  & h^T_{12} & C & C & -C \\ \hline
 &  H^T(\tilde{Y}) & \epsilon_{32} & \epsilon_{31} 
    & \epsilon_{21}+\epsilon_{30} \\
  & h^T_{33} & 0 & 0 & 0 \end{array} \right]
\end{eqnarray*}

The new $h_{12}^T$ is again the base of the wedge, and 
$h_{15}^T = [h_{12}^T \gap C \gap C \gap -C]$ is the top.
The nonsimplicity $I^2$ becomes a $3$-dimensional hypercube
with vertices
$$y_{00b}, y_{00t}, y_{0tb}, y_{0tt}, y_{t0b}, y_{t0t}, y_{ttb}, y_{ttt}.$$

Since we have made positive perturbations to $h_{21}^T$ and $h_{30}^T$,
both of these truncate away the nonsimple vertices in the base, replacing them
with vertices with last coordinate $0$:
$$y_{000}, y_{00t}, y_{0t0}, y_{0tt}, y_{t00}, y_{t0t}, y_{tt0}, y_{ttt}.$$

The effect of perturbing two hyperplanes is to partition the nonsimplicity in $Y$.
Of the eleven hyperplanes supporting the $3$-dimensional space, we have shifted
two of them to the boundary.  One would have sufficed to create this section of the 
boundary, so the second hyperplane is excessive.

\begin{table}
\begin{tabular}{cccccc} \hline
$P$ & ${\rm dim}P$ & $S$ & ${\rm dim}S$ & \# supporting facets & excess \\
 & $d$ & & $k$ & $m$ & $m-(d-k)$ \\ \hline
   $\PWdg^2 P_5$ & $7$ & $x$ & $0$ & $14$ & $7$ \\
   & & $I^2$ & $2$ & $11$ & $6$ \\ \hline
   $\PWdg^3 P_5$ & $8$ & $x$ & $0$ & $15$ & $7$ \\
   & & $I^3$ & $3$ & $9$ & $4$ \\ 
   & & $I^2$ & $2$ & $11$ & $5$ \\ \hline
\end{tabular}
\caption{\label{P8Table}
After the third perturbed wedge, since we perturbed two hyperplanes,
the nonsimplicity in $Y$ is partitioned into two nonsimplicities $I^2\subset I^3$.
The excess of the $3$-dimensional nonsimplicity drops by two, but one of these
is picked back up in the $2$-dimensional nonsimplicity. }
\end{table}

As we proceed, we will need to perturb facets out of both of these nonsimplicities.
Santos and Weibel take a specific strategy.  In the fourth perturbed wedge, we perturb
$h_{21}^T$ and $h_{22}^T$.  We continue perturbing overlapping pairs,
until we exhaust the excess of the nonsimplicities in $Y$.

\subsection{Finishing the perturbed wedges with feet at $X$.}
We need to apply a total of $8$ perturbed wedges with feet in $X$ in oder 
to reduce the excess of the nonsimplicity in $Y$ to $0$.
When this set of perturbed wedges are completed, the inward-pointing
normals form the $33\times 14$ matrix $H^T(\PWdg^8 P_5)$, shown in
Figure~\ref{HP13Fig}.

\begin{figure} \[
\left[ \begin{array}{rccccccccc}
 & H^T(\tilde{X}) & \cons{0} & \cons{0} & \cons{0} & \cons{0} & \cons{0} & \cons{0} & \cons{0} & \cons{0} \\
 &  h^T_{12} & C & C & C & C & C & C & C & C  \\
{\scriptstyle 13:}  & h^T_{12} & -C & 0 & 0 & 0 & 0 & 0 & 0 & 0 \\
{\scriptstyle 14:}  & h^T_{12} & C & -C & 0 & 0 & 0 & 0 & 0 & 0 \\   
{\scriptstyle 15:}  & h^T_{12} & C & C & -C  & 0 & 0 & 0 & 0 & 0\\ 
{\scriptstyle 16:}  & h^T_{12} & C & C &  C & -C & 0 & 0 & 0 & 0\\ 
{\scriptstyle 17:}  & h^T_{12} & C & C & C & C & -C & 0 & 0 & 0\\ 
{\scriptstyle 18:}  & h^T_{12} & C & C & C & C & C & -C & 0 & 0\\ 
{\scriptstyle 19:}  & h^T_{12} & C & C & C & C & C & C & -C  & 0 \\ 
{\scriptstyle 20:}  & h^T_{12} & C & C & C & C & C & C & C & -C \\  \hline
 &  H^T(\tilde{Y}) & \epsilon_{32} & \epsilon_{31} &
    {\scriptstyle \epsilon_{21}+\epsilon_{30}} & {\scriptstyle \epsilon_{21}+\epsilon_{22}} & 
     {\scriptstyle \epsilon_{22}+\epsilon_{23}} & {\scriptstyle \epsilon_{23}+\epsilon_{24}} &
     {\scriptstyle \epsilon_{24}+\epsilon_{25}} & {\scriptstyle \epsilon_{25}+\epsilon_{26}} \\
  & h^T_{33} & 0 & 0 & 0 & 0 & 0 & 0 & 0 & 0 \end{array} \right] \]
\caption{\label{HP13Fig}The $33 \times 14$ matrix $H^T(\PWdg^8 P_5)$
of inward-pointing normals, after we have completed the perturbed wedges with
feet in $X$.  The nonsimplicity in $Y$ has been eliminated.}
\end{figure}

Due to the regularity of the applications of the perturbed wedge -- wedging over $h_{12}^T$
and perturbing over a pair of facets, one of which was in a previous pair -- we make just
two observations about the rest of these perturbed wedges.

\subsubsection{Transfering the nonsimplicity.}
Let's consider the first overlapping pair of perturbations.  In forming $\PWdg^3 P_5$
we perturbed $h_{21}^T$ and $h_{30}^T$.  This partitioned the nonsimplicity in $Y$,
making one of its boundary faces itself a nonsimplicity.  

Now in forming
$\PWdg^4 P_5$ we perturb $h_{21}^T$ and $h_{22}^T$.  
The larger nonsimplicity is now $4$-dimensional.  The second perturbation on
$h_{21}^T$ moves this hyperplane into a diagonal position, so that it supports
a $2$-dimensional face on the boundary of $I^4$.  So the nonsimplicities in $Y$
are now partitioned into a $4$-dimensional nonsimplicity $I^4$ of excess $3$, one of whose
$2$-dimensional faces $I^2$ has one extra supporting hyperplane, for excess $4$.

We list the supporting hyperplanes here, to illustrate the transitions of the nonsimplicities
through the next two applications of the perturbed wedge.
\[
\begin{array}{cccl}
P & S  & {\rm excess} &  \; h_j^T \; {\rm supporting} \; S \\ \hline
\PWdg^3 P_5 & I^3 & 4 & h_{22}^T, \ldots, h_{29}^T, h_{33}^T \\
 & I^2 & 5 & h_{21}^T,\ldots,h_{30}^T, h_{33}^T = I^3 \cap h_{21}^T \cap h_{30}^T \\ \hline
\PWdg^4 P_5 & I^4 & 3 & h_{23}^T, \ldots, h_{29}^T, h_{33}^T \\
  & I^2 & 4 & h_{21}^T,\ldots,h_{30}^T, h_{33}^T 
  = I^4 \cap h_{21}^T \cap h_{22}^T \cap h_{30}^T \\ \hline 
\PWdg^5 P_5 & I^5 & 2 & h_{24}^T, \ldots, h_{29}^T, h_{33}^T \\
  & I^2 & 3 & h_{21}^T,\ldots,h_{30}^T, h_{33}^T 
  = I^5 \cap h_{21}^T \cap h_{22}^T \cap h_{23}^T \cap h_{30}^T \\  \hline 
\end{array}
\]

Santos and Weibel's construction provides a nice opportunity to study some variation in the
way the nonsimplicities evolve under this strategy for the perturbations in $Y$.
The larger nonsimplicity vanishes -- that is, the excess is eliminated -- after seven
applications of the perturbed wedge.  Refer to Table~\ref{PkTable}.  The 
remaining nonsimplicity is $2$-dimensional.
It's interesting to note that the last perturbation doesn't require two facets 
to be perturbed, since the excess is only $1$; but perturbing the pair
$h_{25}^T, \; h_{26}^T$ completes a nice banded aspect to the construction.

\begin{table}
\begin{tabular}{cccccc} \hline
$P$ & ${\rm dim}P$ & $S$ & ${\rm dim}S$ & \# supporting facets & excess \\
 & $d$ & & $k$ & $m$ & $m-(d-k)$ \\ \hline
    $\PWdg^3 P_5$ & $8$ & $x$ & $0$ & $15$ & $7$ \\
   & & $I^3$ & $3$ & $9$ & $4$ \\ 
   & & $I^2$ & $2$ & $11$ & $5$ \\ \hline
   $\PWdg^4 P_5$ & $9$ & $x$ & $0$ & $16$ & $7$ \\
   & & $I^4$ & $4$ & $8$ & $3$ \\ 
   & & $I^2$ & $2$ & $11$ & $4$ \\ \hline
   $\PWdg^5 P_5$ & $10$ & $x$ & $0$ & $17$ & $7$ \\
   & & $I^5$ & $5$ & $7$ & $2$ \\ 
   & & $I^2$ & $2$ & $11$ & $3$ \\  \hline
   $\PWdg^6 P_5$ & $11$ & $x$ & $0$ & $18$ & $7$ \\
   & & $I^6$ & $6$ & $6$ & $1$ \\ 
   & & $I^2$ & $2$ & $11$ & $2$ \\ \hline
   $\PWdg^7 P_5$ & $12$ & $x$ & $0$ & $19$ & $7$ \\
   & & $(I^7)$ & $7$ & $5$ & ${\bf 0}$ \\ 
   & & $I^2$ & $2$ & $11$ & $1$ \\ \hline
   $\PWdg^8 P_5$ & $13$ & $x$ & $0$ & $20$ & $7$ \\
   & & $(I^2)$ & $2$ & $11$ & ${\bf 0}$ \\ \hline
\end{tabular}
\caption{\label{PkTable}
We tabulate the nonsimplicities through the eight applications of the perturbed wedge
with $h_{12}^T$ as foot and perturbations in $Y$.}
\end{table}

\subsection{Perturbed wedges with feet in $Y$.}
Now that the nonsimplicity in $Y$ has been eliminated, we have to apply perturbed wedges
with feet in $Y$ in order to eliminate the nonsimplicity in $X$.  The nonsimplicity in $X$ is
still concentrated at the single nonsimple vertex $x$, which has excess $7$.

The approach is almost identical.  We apply the wedges with $h_{33}^T$ as foot, indexing the
base as the new $h_{33}^T$ and the top as a new facet.  We apply perturbations in $X$
with the first three being of single facets, and the next four being of overlapping pairs of facets.

There is one major exception in this presentation of Santos and Weibel's construction.  We have
laid out the order of operations as indicated by the columns of $H^T$, completing the
wedges with feet in $X$ and then turning to the wedges with feet in $Y$.
The first perturbation in $X$ arises more naturally if we performed this operation first, 
perturbing $h_{12}^T$ before using it as the foot of eight wedges.
If we had taken the very first wedge over $h_{33}^T$, 
the effect of the first operation would ripple
automatically through the natural images of $h_{12}^T$.  

Instead, in our first perturbed
wedge with foot in $Y$, we perturb {\em all} of the facets $h_{12}^T,\ldots,h_{20}^T$.
We call this column out when we write down the full $40\times 21$ matrix of 
inward-pointing normals.
\[
H^T(\PWdg^7 \PWdg^8 P_5) = 
\left[ \begin{array}{ccccc}
 H^T(\tilde{X}) & \cons{0} & \cons{0} & \epsilon_X \\
 \cons{h_{12}^T} & W_8 & \cons{\epsilon} & \cons{0} \\ \hline
 H^T(\tilde{Y}) & \epsilon_Y & \cons{0} & \cons{0} \\
 \cons{h_{33}^T} & \cons{0} & \multicolumn{2}{c}{W_7} 
\end{array} \right]
\]
Here $W_8$ is the block that encodes the eight wedges over $h_{12}^T$,
and $W_7$ is a block of the same structure encoding the seven wedges
over $h_{33}^T$.  The block $\cons{\epsilon}$ is a $9\times 1$ column
of a constant $\epsilon$.
For the blocks $\epsilon_Y$ and $\epsilon_X$ we have
$$ \epsilon_Y = 
 \left[ \begin{array}{cccccccc}
\epsilon_{32} & \epsilon_{31} &
     \epsilon_{21}+\epsilon_{30} & \epsilon_{21}+\epsilon_{22} & 
    \epsilon_{22}+\epsilon_{23} &  \epsilon_{23}+\epsilon_{24} &
     \epsilon_{24}+\epsilon_{25} & \epsilon_{25}+\epsilon_{26} 
\end{array} \right]
$$
and
$$ \epsilon_X = 
\left[ \begin{array}{cccccc}
\epsilon_{11} & \epsilon_{1} &
  \epsilon_{1}+\epsilon_{2} &  \epsilon_{2}+\epsilon_{3} & 
  \epsilon_{3}+\epsilon_{4} & \epsilon_{4}+\epsilon_{6} 
 \end{array} \right]
 $$

In Table~\ref{PYTable} we list the nonsimplicities in X through these
seven perturbed wedges.  In the first perturbation, when we
perturb all nine images of $h_{12}^T$, the nonsimplicity is concentrated at the nonsimple
vertex $x_0$.  Thereafter the construction proceeds at each step by raising the dimension
of the nonsimplicity and decreasing the excess. 

The nonsimplicity does not get partitioned in this sequence of perturbed wedges, as it did
in $Y$, although we do perturb overlapping pairs of facets.  The difference is that when
we introduced the first pair of facets to be perturbed in $Y$, neither had previously been 
perturbed, so both facets shifted to the boundary in this move.  Here in $X$, the first pair
we use is $h_1^T$ and $h_2^T$; but we have already perturbed $h_1^T$, so this facet is
no longer supporting the nonsimplicity.  There is no harm in perturbing $h_1^T$ at this
point, but it has no effect on the nonsimplicity or its excess.

\begin{table}
\begin{tabular}{cccccc} \hline
$P$ & ${\rm dim}P$ & $S$ & ${\rm dim}S$ & \# supporting facets & excess \\
 & $d$ & & $k$ & $m$ & $m-(d-k)$ \\ \hline
    $\PWdg^8 P_5$ & $13$ & $x$ & $0$ & $20$ & $7$ \\ \hline
    $\PWdg \PWdg^8 P_5$ & $14$ & $x_0$ & $0$ & $20$ & $6$ \\ \hline
    $\PWdg^2 \PWdg^8 P_5$ & $15$ & $[x_0,x_t]$ & $1$ & $19$ & $5$ \\
    $\PWdg^3 \PWdg^8 P_5$ & $16$ & $I^2$ & $2$ & $18$ & $4$ \\ 
    $\PWdg^4 \PWdg^8 P_5$ & $17$ & $I^3$ & $3$ & $17$ & $3$ \\ 
    $\PWdg^5 \PWdg^8 P_5$ & $18$ & $I^4$ & $4$ & $16$ & $2$ \\ 
    $\PWdg^6 \PWdg^8 P_5$ & $19$ & $I^5$ & $5$ & $15$ & $1$ \\ \hline
    $\PWdg^7 \PWdg^8 P_5$ & $20$ & $(I^6)$ & $6$ & $14$ & ${\bf 0}$ \\ \hline
\end{tabular}
\caption{\label{PYTable}
We tabulate the nonsimplicities through the seven applications of the perturbed wedge
with $h_{33}^T$ as foot and perturbations in $X$ as prescribed by Santos and Weibel.}
\end{table}

\section{Alternate constructions}
Informed by our analysis of nonsimplicities above,
we propose a couple of alternate constructions by which to proceed from $P_5$
to a counterexample to the Hirsch conjecture via repeated applications of the 
perturbed wedge.

We need to apply eight wedges with feet in $X$ and seven wedges with feet in $Y$.
Suppose that we use eight of the original facets in $X$ for these feet, and seven of the
original facets in $Y$.  This is a different strategy than Santos and Weibel's reuse of the base
of the previous wedge as the new foot (the role of $h_{12}^T$ and $h_{33}^T$).

Let us reorder the indices of the hyperplanes in $X$ to reflect the order in which we
use them as the foot of the wedge.  Let us also reorder the indices in $Y$.  We can
then write $H^T(P_5)$ as
$$ H^T(P_5) =
\left[ \begin{array}{c} H_8^T(X) \\ H_4^T(X) \\ \hline 
H_7^T(Y) \\ H_6^T(Y) \end{array} \right].
$$

After applying the wedges at both ends, we have the blocks
$$ H^T(\Wdg^7 \Wdg^8 P_5) =
\left[ \begin{array}{ccc} H_8^T(X) & C\cdot I_8 & \cons{0} \\
H_8^T(X) & -C\cdot I_8 & \cons{0} \\ H_4^T(X) & \cons{0} & \cons{0} \\ \hline 
H_7^T(Y) & \cons{0} & C \cdot I_7\\
H_7^T(Y) & \cons{0} & -C \cdot I_7\\ 
H_6^T(Y) & \cons{0} & \cons{0} \end{array} \right].
$$

To complete the constructions, we have to apply perturbations in $X$ in the last
seven columns, and perturbations in $Y$ in the middle eight columns.  We can
accommodate the perturbations in $X$ in the upper right $8 \times 7$ block.
For the perturbations in $Y$, we will have to use two blocks as indicated below.
\begin{equation}\label{Hw15PEq}
 H^T(\PWdg^7 \PWdg^8 P_5) =
\left[ \begin{array}{ccc} H_8^T(X) & C\cdot I_8 & \epsilon(X)_7 \\
H_8^T(X) & -C\cdot I_8 & \cons{0} \\ H_4^T(X) & \cons{0} & \cons{0} \\ \hline 
H_7^T(Y) & \epsilon(Y)_7 & C \cdot I_7\\
H_7^T(Y) & \cons{0} & -C \cdot I_7\\ 
H_6^T(Y) & \epsilon(Y)_6 & \cons{0} \end{array} \right].
\end{equation}
In the following we give two different prescriptions for these perturbations.

\subsection{Singly perturbed wedge.}
In perhaps the simplest approach, we perturb a single facet in each 
application of the perturbed wedge.
The block $\epsilon(X)_7$ is an $8 \times 7$ block with the seven 
diagonal entries $\epsilon$ and otherwise $0$.
For the perturbations in $Y$, we use the $7 \times 8$ block $\epsilon(Y)_7$ with
$\epsilon$ in the seven super-diagonal entries and otherwise $0$; and for the
eighth perturbation we use $\epsilon(Y)_6$, setting the upper-left entry to $\epsilon$
and the rest $0$.

\subsection{Pinched perturbed wedge.}
This is perhaps the closest in spirit to Santos' desire to produce all-but-simple spindles.
In this approach, after each wedge we perturb two facets, one with a positive perturbation
and one negative.  This pinches the non singularity off at a vertex again.  The nonsingularity
tries to grow in dimension, for example to the edge $[y_b,y_t]$.  The positive perturbation
truncates away the segment $[y_b,y_0)$; now the negative perturbation truncates away
the segment $(y_0,y_t]$, leaving an edge of length $0$ -- the nonsimple vertex $y_0$.

For the matrix for this construction we again use Equation~\ref{Hw15PEq}.
Here the block $\epsilon(X)_7$ is an $8 \times 7$ block with the seven 
diagonal entries $\epsilon$, the seven sub-diagonal entries $-\epsilon$, 
and otherwise $0$.

For the perturbations in $Y$, we need to use two matrix blocks to cover the
eight perturbations. We mimic the bands in $\epsilon(X)_7$ by using the two blocks
$\epsilon(Y)_7$ and $\epsilon(Y)_6$.  In the $7 \times 8$ block $\epsilon(Y)_7$,
we put $\epsilon$ in the seven super-diagonal entries, $-\epsilon$ in the six entries of
the next diagonal, and otherwise $0$.  We pick up the remnant again in $\epsilon(Y)_6$
by making the lower left entry $\epsilon$, the two entries on the adjacent diagonal
$-\epsilon$, and otherwise $0$.

This nonsimple vertex has the combinatorial structure of a $k$-dimensional hypercube
with boundary of zero length.

\subsection{General requirements on the $\epsilon$-blocks.}
What do we require of the perturbations in $X$ and $Y$ in order to produce
a simple polytope?  Let's look at the general system
\[ H(\PWdg^7 \PWdg^8 P_5) = \left[ \begin{array}{cc|c}
\begin{array}{c} H_8^T(X)  \\ H_8^T(X)  \\
H_4^T(X) \end{array} & 
\begin{array}{c}  C \cdot I_{8\times 8} \\  -C \cdot I_{8\times 8} \\ \cons{0} \end{array} & 
\epsilon(X) \\ \hline
\begin{array}{c} H_7^T(Y)  \\ H_7^T(Y) \\ H_6^T(Y) \end{array} & 
\epsilon(Y) &
\begin{array}{c} C \cdot I_{7\times 7} \\  -C \cdot I_{7\times 7} \\ \cons{0} \end{array} 
\end{array}
\right] \]
This system encodes the inward-pointing normals, 
$H^T V \ge \cons{0}$, of the perturbed wedges when we 
don't reuse facets for the feet of the wedges. 

Let's consider the perturbations in $X$, encoded in the $20 \times 7$ block
$\epsilon(X)$.  Before the perturbations, when $\epsilon(X)=\cons{0}$, each
image of $x$ can be indexed by $b$ if it is incident to the base (corresponding to
entries in $C\cdot I$) or $t$ if incident to the top (corresponding to entries
in $-C \cdot I$) for each of these dimensions.
\[ x_{b\cdots b} = \left[\begin{array}{c} x \\ \hline \cons{0} \\ \hline \cons{-}
\end{array} \right], \; \ldots , \; 
x_{t \cdots t} = \left[\begin{array}{c} x \\ \hline \cons{0} \\ \hline \cons{+}
\end{array} \right]
\]
At this point, each image of $x$ is incident to all $20$ of the facets in $X$
and half of the $14$ facets that are bases or tops of the wedges in $Y$.

Before perturbations in $X$, the nonsimplicity is a $7$-dimensional
hypercube supported by the $20$ hyperplanes in $X$ in a $20$-dimensional 
polytope.  So the excess is $20-(20-7)=7$.  We therefore need the perturbations
in $\epsilon(X)$ to have ${\rm rank} (\epsilon(X)) = 7$.

Similarly we need ${\rm rank} (\epsilon(Y)) = 8$.

Any positive perturbations truncate away the base vertices in that dimension, and 
any negative perturbations truncate away the top vertices in that dimension.  If in
a given dimension we include both positive and negative perturbations, the vertical
segments in that dimension are pinched off at coordinate $0$. 

We note finally that there are perturbations in this general setting that do not 
correspond to sequential applications of the perturbed wedge.  As a simple example of
this, if in the first column of $\epsilon(X)$ we perturb only the first facet in $X$, and in the first column of $\epsilon(Y)$ we perturb only the first facet in $Y$.  Under a sequential application
of perturbed wedges, we would see one or the other of these duplicated.

\section{Summary}
Santos' construction of the first known counterexample to the Hirsch conjecture, for bounded
polytopes, follows the strategy of first finding a counterexample to the nonrevisiting conjecture.
In this case Santos constructs a $5$-dimensional all-but-simple spindle $(P,x,y)$ of length $6$.

For simple polytopes, if we had a counterexample to the nonrevisiting conjecture, we would
produce the corresponding counterexample to the Hirsch conjecture through repeated wedging,
over all the facets not incident to $x$ or $y$.  However, Santos' $5$-dimensional spindle is not simple.  Every facet is incident to either $x$ or $y$,
so we need an alternate method to produce the corresponding counterexample to the Hirsch
conjecture.  Santos has offered the perturbed wedge to accomplish this.

In the counterexample posted in 2012, Santos and Weibel begin with an all-but-simple spindle
in dimension $5$, with $25$ facets and length $6$.  They use this as the starting point
for applying the perturbed wedge $15$ times, to obtain a counterexample to the Hirsch
conjecture in dimension $20$.

We have used Santos and Weibel's specific counterexample to guide our exploration of several technical
details underlying the perturbed wedge construction.  We have shown that there are constraints
on the selection of a facet to be the foot of the wedge and also on the selection of the facet to be
perturbed.  We have also seen how the nonsimplicity evolves under different strategies for the
perturbations.

\section*{Appendix: Santos and Weibel's all-but-simple spindle.}
\renewcommand{\arraystretch}{1.2}

In the preceding we explore the perturbed wedge as it was applied 
in Santos and Weibel's counterexample as posted in 2012 \cite{Weibel}.  The underlying polytope
is a $5$-dimensional all-but-simple spindle $(P,x,y)$ with $25$ facets and length $6$.  
The inward-pointing normals to the facets are given by:
\begin{eqnarray*}
H^T (P)  =  \left[ \begin{array}{c} X \\ Y \end{array} \right] 
 &=& \scriptstyle \left[ \begin{array}{rrrrrr}
 1 & 100 & 0 & 0 & 21 & -7 \\
 1 & 100 & 0 & 0 & 16 & -15 \\
 1 & 100 & 0 & 0 & 0 & -32 \\
 1 & 100 & 0 & 0 & -16 & -15 \\
 1 & 100 & 0 & 0 & -21 & -7 \\
 1 & 100 & 0 & 0 & -20 & -4 \\
 1 & 100 & 0 & 0 & 0 & 32 \\
 1 & 100 & 0 & 0 & 20 & -4 \\
 1 & 100 & \scriptstyle 3/100 & \scriptstyle -1/50 & 0 & -30 \\
 1 & 100 & \scriptstyle -3/100 & \scriptstyle -1/50 & 0 & 30 \\
 1 & 100 & \scriptstyle -3/2000 & \scriptstyle 7/2000 & 0 & \scriptstyle 318/10 \\
 h_{12}^T= 1 & 100 & \scriptstyle 3/2000 & \scriptstyle 7/2000 & 0 & \scriptstyle -318/10 \\  \hline
 h_{21}^T = 1 & -100 & 30 & 0 & 0 & 0 \\
 1 & -100 & 4 & -15 & 0 & 0 \\
 1 & -100 & 0 & \scriptstyle -33/2 & 0 & 0 \\
 1 & -100 & -1 & -16 & 0 & 0 \\
 1 & -100 & \scriptstyle -55/2 & 0 & 0 & 0 \\
 1 & -100 & -17 & 18 & 0 & 0 \\
 1 & -100 & 0 & 38 & 0 & 0 \\
 1 & -100 & 22 & 17 & 0 & 0 \\
 1 & -100 & -10 & 0 & \scriptstyle 1/5 & \scriptstyle -1/5 \\
 1 & -100 & \scriptstyle 2999/100 & 0 & \scriptstyle -3/25 & \scriptstyle -1/5 \\
 1 & -100 & \scriptstyle 299999/10000 & 0 & 0 & \scriptstyle 1/100 \\
 1 & -100 & \scriptstyle -2745/100 & 0 & \scriptstyle 1/5000 & \scriptstyle 1/800 \\
h_{33}^T =    1 & -100 & -27 & 0 & \scriptstyle 1/500 & \scriptstyle -1/80 \\ 
 \end{array} \right]
\end{eqnarray*}
We index the rows to be consistent with Santos and Weibel's completed counterexample to the Hirsch conjecture.
The indices $h_{13}^T,\ldots,h_{20}^T$ are reserved for the applications of the wedge
at the vertex $x$, and the indices $h_{34}^T, \ldots,h_{40}^T$ are reserved for the applications
of the wedge at the vertex $y$.

\bibliographystyle{amsplain}
\providecommand{\bysame}{\leavevmode\hbox to3em{\hrulefill}\thinspace}
\providecommand{\MR}{\relax\ifhmode\unskip\space\fi MR }
\providecommand{\MRhref}[2]{%
  \href{http://www.ams.org/mathscinet-getitem?mr=#1}{#2}
}
\providecommand{\href}[2]{#2}

\end{document}